\numberwithin{equation}{section}
\numberwithin{figure}{section}
\numberwithin{table}{section}
\numberwithin{algorithm}{section}
\numberwithin{equation}{section}
\def\R{\mathbb{R}}
\def\Sc{\mathbb{S}}
\def\Sn{\Sc^n}
\def\Snp{\Sc_+^n}
\def\Rm{\mathbb{R}^m}
\def\Rn{\mathbb{R}^n}
\def\Rnp{\mathbb{R}_+^n}
\def\Rkp{\mathbb{R}_+^k}
\def\Kexp{K_{\text{exp}}}
\def\eqref#1{{\normalfont(\ref{#1})}}
\def\eqref#1{{\normalfont(\ref{#1})}}
\newtheorem{theorem}{Theorem}[section]
\newtheorem{definition}[theorem]{Definition}
\newtheorem{example}[theorem]{Example}
\newtheorem{prop}[theorem]{Proposition}
\newtheorem{proposition}[theorem]{Proposition}
\newtheorem{corollary}[theorem]{Corollary}
\newtheorem{remark}[theorem]{Remark}
\newtheorem{lemma}[theorem]{Lemma}
\crefname{thm}{Theorem}{Theorems}
\Crefname{thm}{Theorem}{Theorems}
\crefname{problem}{Problem}{Theorems}
\Crefname{problem}{Problem}{Theorems}
\crefname{conjecture}{Conjecture}{Theorems}
\Crefname{conjecture}{Conjecture}{Theorems}
\crefname{proposition}{Proposition}{Propositions}
\Crefname{proposition}{Proposition}{Propositions}
\crefname{prop}{Proposition}{Propositions}
\Crefname{prop}{Proposition}{Propositions}
\crefname{cor}{Corollary}{Corollaries}
\Crefname{cor}{Corollary}{Corollaries}
\crefname{lem}{Lemma}{Lemmas}
\Crefname{lem}{Lemma}{Lemmas}
\theoremstyle{definition}
\crefname{definition}{definition}{definitions}
\Crefname{definition}{Definition}{Definitions}
\crefname{defn}{definition}{definitions}
\Crefname{defn}{Definition}{Definitions}
\crefname{remark}{Remark}{Remarks}
\Crefname{remark}{Remark}{Remarks}
\crefname{rmk}{Remark}{Remarks}
\Crefname{rmk}{Remark}{Remarks}
\crefname{example}{Example}{Examples}
\Crefname{example}{Example}{Examples}
\crefname{align}{}{}
\Crefname{align}{}{}
\crefname{equation}{}{}
\Crefname{equation}{}{}
\newcommand{\textdef}[1]{\textit{#1}\index{#1}}
\newcommand{\<}{\langle}
\renewcommand{\>}{\rangle}
\newcommand{\bE}{\mathbb{E}}
\newcommand{\bS}{\mathbb{S}}
\newcommand{\cN}{{\mathcal N} }
\newcommand{\cR}{{\mathcal R} }
\newcommand{\cL}{{\mathcal L} }
\newcommand{\cX}{{\mathcal X} }
\newcommand{\cI}{{\mathcal I} }
\newcommand{\cF}{{\mathcal F} }
\newcommand{\cA}{{\mathcal A} }
\newcommand{\cK}{{\mathcal K} }
\newcommand{\cT}{{\mathcal T} }
\newcommand{\COPn}{\text{COP}^n}
\newcommand{\FR}{\textbf{FR}\,}
\newcommand{\FRp}{\textbf{FR}}
\newcommand{\Skp}{{\mathbb S^{k}_+}\,}
\newcommand{\A}{{\mathcal A}}
\newcommand{\bbm}{\begin{bmatrix}}
\newcommand{\ebm}{\end{bmatrix}}
\newcommand{\bem}{\begin{pmatrix}}
\newcommand{\eem}{\end{pmatrix}}
\newcommand{\beq}{\begin{equation}}
\newcommand{\beqs}{\begin{equation*}}
\newcommand{\bet}{\begin{table}}
\newcommand{\eeq}{\end{equation}}
\newcommand{\eeqs}{\end{equation*}}
\newcommand{\beqr}{\begin{eqnarray}}
\DeclareMathOperator{\face}{face}
\DeclareMathOperator{\sd}{sd}
\DeclareMathOperator{\maxsd}{maxsd}
\DeclareMathOperator{\ips}{ips}
\DeclareMathOperator{\nul}{null}
\DeclareMathOperator{\range}{range}
\DeclareMathOperator{\trace}{{trace}}
\DeclareMathOperator{\Diag}{{Diag}}
\DeclareMathOperator{\relint}{{relint}}
\DeclareMathOperator{\rank}{{rank}}
\DeclareMathOperator{\interior}{{int}}
\DeclareMathOperator{\spanl}{{span}}
\DeclareMathOperator{\supp}{{supp}}
\newcommand{\nc}{\newcommand}
\nc{\arrow}{{\rm arrow\,}}
\nc{\Arrow}{{\rm Arrow\,}}
\nc{\BoDiag}{{\rm B^0Diag\,}}
\nc{\bodiag}{{\rm b^0diag\,}}
\nc{\Mm}{{\mathcal M}^{m} }
\nc{\Mmn}{{\mathcal M}^{mn} }
\nc{\Mnr}{{\mathcal M}_{nr} }
\nc{\Mnmr}{{\mathcal M}_{(n-1)r} }
\nc{\kwqqp}{Q{$^2$}P\,}
\nc{\kwqqps}{Q{$^2$}Ps}
\nc{\notinaho}{(X,S)\in \overline{AHO}(\A)}
\nc{\inaho}{(X,S)\in AHO(\A)}
\newcommand{\bea}{\begin{eqnarray}}%
\newcommand{\eea}{\end{eqnarray}}%
\newcommand{\beas}{\begin{eqnarray*}}%
\newcommand{\eeas}{\end{eqnarray*}}%
\newcommand{\Hnp}[1][]{\,\mathbb{H}_+^{\ifthenelse{\equal{#1}{}}{n}{#1}}}
\newcommand{\Hn}[1][]{\,\mathbb{H}^{\ifthenelse{\equal{#1}{}}{n}{#1}}}
\newcommand{\Dn}[1][]{\,\mathbb{D}^{\ifthenelse{\equal{#1}{}}{n}{#1}}}
\begin{document}

\bibliographystyle{plain}
\title{
Implicit Redundancy and Degeneracy in Conic Program}
  \author{
Haesol Im\thanks{j5im@uwaterloo.ca} 
}
\date{ 
		\today\\
}
\maketitle




\vspace{-.4in}

\begin{abstract}
This paper examines the feasible region of a standard conic program represented as the intersection of a closed convex cone and a set of linear equalities. 
It is recently shown that when Slater constraint qualification (strict feasibility) fails for the classes of linear and semidefinite programs, two key properties emerge within the feasible region; (a) every point in the feasible region is degenerate; (b) the constraint system inherits implicit redundancies.
In this paper we show that degeneracy and implicit redundancies are inherent and universal traits of all conic programs in the absence of strict feasibility.
\end{abstract}

{\bf Keywords:}
Degeneracy, Facial reduction, Implicit redundancy


\section{Introduction}
\label{sec:intro}

We consider the feasible region of a standard conic program 
represented as the intersection of a closed convex cone $\cK$ and an affine subspace $\cL$. 
Throughout the paper we consider $\cK$ and $\cL$ in a finite dimensional Euclidean space~$\bE$.
We represent the affine subspace by
\[
\cL = \{ x \in \bE : \cA x = b  \in \Rm \},
\]
where $\cA : \bE \to \Rm$ is a linear map.
We define the feasible region of a conic program
\[
\cF :=  \cK \cap \cL =  \{x\in \cK: \cA x = b\} \ne \emptyset.
\]
We assume that $\cA$ is \emph{surjective}. 
Since $\cF$ is not empty, $\cA$ being surjective means that $\cA x=b$ itself does not contain any redundant equalities. 
If $\cA$ is not given surjective, numerical methods \cite{MR4054120,AndersenErlingD95,CartisGouldRelint06}
can be used for sorting out redundant equalities from the system $\cA x = b$.

$\cF$ is said to satisfy strict feasibility (Slater constraint qualification) if $\cF$ contains a point in the relative interior of $\cK$. 
This paper particularly focuses on $\cF$ that fails strict feasibility, i.e.,~$\cF$ that lies on the relative boundary of $\cK$.
It is well-known that conic programs that fail this regularity condition does not guarantee strong duality, i.e., the primal and dual optimal values may not agree or the dual optimal value is not attained by a dual feasible point; see~\cite[Section 2]{alma9941611053505162}.
For this reason, many optimization algorithms are developed under this regularity condition. 
The class of linear programs ($\cK = \Rnp$) avoids this pathology and consequences of absence of strict feasibility for linear programs are rarely discussed in the literature. 
A recent work \cite{ImWolk:22} highlights the importance of  strict feasibility for linear programs and reveals the connection between degeneracy of extreme points and strict feasibility.

The cone of $n$-by-$n$ positive semidefinite matrices, $\Snp$, is a well-studied object. The computationally efficient facial structure of $\Snp$ is clearly identified and there are successful numerical solvers that take advantage of this knowledge \cite{mosek,TohKim-Chuan2012OtIa,sturm1999using}.
Let $\cF_{\Snp} := \{x\in \Snp : \cA x = b\}$.
When $\cF_{\Snp}$ fails strict feasibility, 
two interesting properties are shown recently:
\begin{enumerate}
\item \emph{Every} point in $\cF_{\Snp}$ is \emph{degenerate} \cite{IJTWW:23}.
\item When the domain of $\cA$ is restricted to be the minimal face of $\Snp$ containing $\cF_{\Snp}$, $\cA$~is not surjective. In other words,
$\cF_{\Snp}$ contains \emph{implicitly redundant} linear constraints~\cite{HaesolIm:2022}.
\end{enumerate}
We explanations for degeneracy and implicit redundancy in \Cref{sec:Degeneracy,sec:MainResult} respectively.
In this paper we show that these two properties are \emph{universal} properties of all conic programs that fail strict feasibility, extending beyond linear and semidefinite programs. 
Specifically, for any conic program that fails strict feasibility, 
we demonstrate that every feasible point in $\cF$ is degenerate, based on the concept of degeneracy introduced in \cite{PatakiSVW:99}.
Additionally, we show that implicit redundancy, characterized by the inherent loss of surjectivity when the domain of the linear map $\cA$ is restricted to the the minimal face containing the feasible region, occurs in any conic program that fails the regularity condition.

\subsection{Notation}
\label{sec:Notation}

Given a linear map $\cT$, we let $\cT^*$ be the adjoint of $\cT$.
We use $\< \cdot, \cdot \>_{\bE}$ to denote the inner product between two vectors in the Euclidean space $\bE$; we omit the subscript $\bE$ when the meaning is clear. 
We use $\cR(\cT)$  to denote the range of the map~$\cT$. 
Given a matrix $X$, 
$\range(X)$ denotes the range of $X$, formed by the linear combinations of columns of $X$.
$\nul(X)$ is used to denote the null-space of $X$.

Given a set $\cX \subset \bE$, 
$\interior (\cX)$ ($\relint(\cX)$, resp.) means the interior (relative interior, resp.) of $\cX$. 
The dimension and the linear span of $\cX$ are denoted $\dim(\cX)$ and $\spanl (\cX)$, respectively.
We use $\cX^\perp$ to indicate the orthogonal complement of $\cX$.
Given a vector $x$, we often use $x^\perp$ to mean $\{x\}^\perp$.
$\Rn$ and $\Sn$ denote the usual vector spaces of vectors of $n$ coordinates and  $n$-by-$n$ symmetric matrices, respectively. 
Given a matrix $A$ of size $m$-by-$n$ and an index set $\cI\subseteq \{1,\ldots,n\}$, we use $A(:,\cI)$ for the submatrix of $A$ formed by the columns of $A$ that correspond to $\cI$.

$\Rnp$ is used for the nonnegative orthant $\{ x\in \Rn : x_i \ge 0, \forall i\}$ and
$\Snp$ is used for the set of $n$-by-$n$ positive semidefinite matrices, $\{ X \in \Sn :  \< y,Xy\>_{\Rn} \ge 0, \forall y \in \Rn \}$. 
When a general conic program is discussed, we omit the subscript and use the symbol~$\cF$.
We use the notation $\cF_\cK$ to distinguish 
a particular setting for the cone $\cK$ from a general conic program.
For example, when the feasible region of the semidefinite program is discussed, we use $\cF_{\Snp}$.

\subsection{Related Work}
\label{sec:RelatedWork}

The concept of implicit redundancies is introduced recently \cite{ImWolk:22,HaesolIm:2022}.
Using this notion, \cite{ImWolk:22} shows that
every extreme point (basic feasible solution) of $\cF_{\Rnp}$ is degenerate in the absence of strict feasibility.
\cite{HaesolIm:2022} shows that $\cF_{\Snp}$ also contains implicit redundant constraints in the absence of strict feasibility.
Using the notion of implicit redundancies, \cite{HaesolIm:2022} 
tightens the Barvinok-Pataki bound \cite{bar01,GPat:95}, and further improves the bound presented in \cite{ImWolk:21}.
On the degeneracy side, 
a recent work \cite{IJTWW:23} links the definition of degeneracy introduced by Pataki 
\cite[Definition 3.3.1]{PatakiSVW:99} to realize that every point in $\cF_{\Snp}$ is degenerate. This is done by exploiting the facial structure of $\Snp$.
Since a linear program is a special case of a semidefinite program, it immediately implies that every point in $\cF_{\Rnp}$ is degenerate, not just limited to the extreme points.

\section{Background}
\label{sec:Background}

This section reviews basic notions and known results in the literature related to the main results.
We present basic definitions including cones and faces, the goal of facial reduction process, and the notion of degeneracy.

\subsection{Faces of Cones}
\label{sec:Prelim}

A closed convex set $\cK \subseteq \bE$ is said to be a \textdef{cone} if 
$x\in \cK$ implies  $\alpha x \in \cK$, $\forall \alpha \ge 0 $.
The \textdef{dual cone} of~$\cK$ is $\cK^* := \{z \in \bE: \<z, x\> \ge 0, \forall x \in \cK \}$.
A convex set $f$ is called a \textdef{face} of $\cX \subseteq \bE$ (denoted $f \unlhd\cX$) if 
\[
\text{ for } x,y\in \cX \text{ with } 
\{ \lambda x + (1-\lambda) y :  \lambda \in (0,1) \} \subset f 
\implies x,y\in f.
\]
A face $f \unlhd \cK$ is said to be \textdef{exposed} if 
$f = \cK \cap \{z\}^\perp$ for some $z \in \cK^*$.
$\cK$ is said to be \textdef{facially exposed} if every face of $\cK$ is exposed. 
Given a convex set $\cX \subset \cK$, the \textdef{minimal face} of $\cK$ containing $\cX$, denoted $\face(\cX, \cK)$,  is
the intersection of faces of $\cK$ that contains $X$, i.e.,
\[
\face(\cX, \cK) 
:=  \cap \{f \unlhd \cK : \cX \subseteq f \}.
\]
It is known that a point $\hat{x} \in \relint (\cX)$ provides the characterization $\face(\cX, \cK) = \face (\hat{x}, \cK)$; see~\cite[Proposition 2.2.5]{Cheung:2013}.
Given a face $f \unlhd\cK$, the \textdef{conjugate face} of $f$, denoted $f^\Delta$, is 
\[
f^\Delta := \{ z \in \cK^* : \< z,x\> = 0, \ \forall x \in f \}.
\]

Understanding the facial structure of a cone is important for designing a computationally efficient algorithm. 
Examples of cones that demonstrate great computational powers include the nonnegative orthant $\Rnp$, the positive semidefinite cone $\Snp$, and the second-order (Lorentz) cone. Moreover, these cones lead to many successful interior point methods for solving the linear, semidefinite, second-order cone programs and the programs with Cartesian products of these cones \cite{mosek,TohKim-Chuan2012OtIa,sturm1999using}.

We list facial characterizations of two cones. 
Every face $f \unlhd \Rnp$ has the form 
\begin{equation}
\label{eq:faceRnp}
f 
= \Rnp \cap \{ x \in \Rn :x_i =0 , i\in \cI \}
= \{ V v : v \ge 0 \}
= V \R_+^{|\cI^c|} ,
\end{equation}
where $V = I_n( :, \cI^c)$ for some index set $\cI \subseteq \{1,\ldots, n\}$ and $I_n$ is the $n$-by-$n$ identity matrix. Here, $\cI^c = \{1,\ldots, n\} \setminus \cI$, the complement of $\cI$.
$\Snp$ is another example of a well-understood cone. 
Every face $f \unlhd \Snp$ has the form 
\begin{equation}
\label{eq:faceSnp}
f 
= \Snp \cap (Q Q^T)^\perp  = V \bS_+^{\rank(\hat{X})} V^T ,
\end{equation}
where $\range(Q) = \nul(\hat{X})$ for some $\hat{X} \in \relint(f)$ and $V\in \R^{n \times \rank(\hat{X})}$ satisfies $\range(V) = \range(\hat{X})$.
Both $\Rnp$ and $\Snp$ are facially exposed.
Other examples of facially exposed cones are
the second-order cone, the doubly nonnegative cone,
the hyperbolicity cone \cite[Theorem 23]{Renegar2006} and the cone of Euclidean distance matrices \cite{MR2166851}.
On the other hand, not every cone is facially exposed. 
For example, 
the exponential cone \cite{LindstromScottB.2023Ebfr},
the completely positive cone \cite{ZhangQinghong2018CpcA}, for $n\ge 5$, and the
copositive cone \cite{Dickinson:10}  are known to be not facially exposed.

\subsection{Facial Reduction for Conic Programs}
\label{sec:FRforConic}

Facial reduction (\FRp) is first introduced by Borwein and Wolkowicz in the $1980$'s~\cite{bw2,bw3}.
In-depth analysis for conic programs with linear objective functions, especially for semidefinite programs, has been conducted for the last two decades. 
Successful applications of \FR are the semidefinite relaxations of NP-hard binary combinatorial optimization problems (e.g., see \cite{KaReWoZh:94,ForbesVrisWolk:11}); and an analytical tool for measuring hardness of solving a problem numerically (e.g., see \cite{S98lmi,SWW:17}).

Facial reduction seeks to form an equivalent problem that satisfies strict feasibility. 
\FR strives to find the minimal face of $\cK$ that contains $\cF$ in order to represent the set
\[
\cF 
= 
\{x \in \cK : \cA x = b \}
=
\{x \in \face(\cF, \cK) :  \cA x = b \} .
\]
Finding $\face(\cF,\cK)$ is often a challenging task
and hence we often rely on an auxiliary lemma. 
\Cref{thm:thmAlt} below lies in the heart of the \FR algorithm.

\begin{lemma}[Theorem of the Alternative]
\label{thm:thmAlt}
For a feasible system $\cF$, exactly one of the following holds. 
\begin{enumerate}
\item 
Strict feasibility holds for $\cF$;
\item 
There exists $y\in \Rm$ such that 
\begin{equation}
\label{eq:AuxSys}
\cA^* y \in \cK^* \setminus \{\cK\}^\perp , \text{ and } \<b,y\> =0.
\end{equation}
\end{enumerate}
\end{lemma}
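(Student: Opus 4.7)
The plan is to derive this dichotomy via a supporting-hyperplane argument applied to the convex image cone $\cA(\cK) \subseteq \Rm$.

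First I would check mutual exclusivity. If $\hat x \in \relint(\cK)\cap\cL$ and $y$ satisfies (2), then
\[
0 \;=\; \langle y,b\rangle \;=\; \langle y,\cA\hat x\rangle \;=\; \langle \cA^*y,\hat x\rangle ,
\]
with $\cA^*y \in \cK^*$ and $\hat x$ in the relative interior of $\cK$. The standard exposure characterization of $\relint(\cK)$ then forces $\cA^*y$ to annihilate all of $\cK$, which in the setting where $\cK$ spans $\bE$ implies $\cA^*y = 0$, contradicting $\cA^*y \ne 0$.

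For the existence direction, suppose (1) fails. In finite dimensions, relative interior commutes with linear images, so $\cA(\relint\cK) = \relint\cA(\cK)$, and the failure of strict feasibility translates to $b \in \cA(\cK) \setminus \relint\cA(\cK)$; that is, $b$ is a relative boundary point of the convex cone $\cA(\cK)\subseteq\Rm$. Applying the supporting-hyperplane theorem at $b$ then produces a nonzero $y\in\Rm$ with $\langle y, c\rangle \ge \langle y,b\rangle$ for every $c\in\cA(\cK)$. I would next specialize $c$ using the cone structure: taking $c=0$ gives $\langle y,b\rangle \le 0$, while taking $c=\lambda b$ for $\lambda>1$ (using that $\cA(\cK)$ is closed under positive scaling of $b$) forces $\langle y,b\rangle \ge 0$, so $\langle y,b\rangle = 0$. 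The support inequality then collapses to $\langle \cA^*y,x\rangle\ge 0$ for all $x\in\cK$, i.e., $\cA^*y\in\cK^*$. Finally, surjectivity of $\cA$ makes $\cA^*$ injective, so $\cA^*y = 0$ would force $y = 0$; the nonzero $y$ delivered by separation therefore gives $\cA^*y\ne 0$, completing (2).

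The real technical content is concentrated in the two classical facts being invoked, namely the commutation $\cA(\relint\cK) = \relint\cA(\cK)$ and the existence of a supporting hyperplane at every relative-boundary point of a convex set in $\Rm$. Both are standard in finite-dimensional convex analysis and I would cite them rather than reprove them. The only step where the explicit surjectivity hypothesis on $\cA$ actively earns its keep is the very last one, where it upgrades the nonzero separating functional $y$ into a nonzero element $\cA^*y$ of the dual cone; this is also the main place where the proof could fail if one dropped that hypothesis.
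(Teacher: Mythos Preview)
The paper does not give its own proof of this lemma; it is quoted from \cite[Theorem 3.1.3]{DrusWolk:16} and used as a black box. So there is nothing in the paper to compare your argument against line by line.

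That said, your proposal is a correct and standard derivation of this alternative. The reduction to the image cone $\cA(\cK)\subseteq\Rm$, the identity $\cA(\relint\cK)=\relint\cA(\cK)$, and the supporting-hyperplane separation at the relative-boundary point $b$ are exactly the ingredients one expects; your extraction of $\langle b,y\rangle=0$ from the cone structure and of $\cA^*y\neq 0$ from surjectivity is clean. You are also right to flag the hypothesis $\spanl(\cK)=\bE$: the paper uses it implicitly (it writes $\bE=\spanl(\cK)$ when discussing the ambient space after facial reduction), and without it the ``exactly one'' part can fail, since $\cA^*y$ could land in $\cK^\perp\cap\cK^*$ and be nonzero even when a strictly feasible point exists. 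If you want the argument to stand on its own, simply state that hypothesis up front rather than inserting it mid-proof.
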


We note that a solution $y$ to \eqref{eq:AuxSys} yields
\[
0 = \<b,y\>_{\Rm} = \<\cA x, y \>_{\Rm} = \< x, \cA^* y\>_{\bE} , \ \forall x \in \cF.
\]
In other words, every feasible point $x\in \cF$ lies in the orthogonal complement of $\cA^* y$, i.e., 
\[
\cF \subseteq \cK \cap (\cA^* y)^\perp.
\]
Consequently, we can rewrite the set $\cF$ by 
\begin{equation}
\label{eq:FwithExposed}
\cF = \{ x \in \cK \cap (\cA^*y)^\perp  : \cA x = b \}.
\end{equation}
The ambient space now reduces from  $\bE =  \spanl (\cK)$
to $\spanl ( \cK \cap (\cA^* y)^\perp )$; the dimension reduction is an attractive by-product of the \FR process.
This completes one iteration of the \FR algorithm. 
Then the \FR algorithm replaces the cone as in \eqref{eq:FwithExposed} (i.e., $\cK \leftarrow \cK \cap (\cA^* y)^\perp$) and solves 
the system~\Cref{eq:AuxSys} repeatedly 
until a strictly feasible point with respect to the reduced cone is guaranteed. 
\Cref{algo:FRproc} summarizes the \FR process; see also \cite{LiuMinghui2018Edas,permfribergandersen,MW13}.

\begin{algorithm}
\begin{algorithmic}
\STATE Given $\cF = \{x\in \cK: \cA x  = b\}$, $\cF^0 = \cF$, $\cK^0 = \cK$, $k=0$
\WHILE{strict feasibility fails for $\cF^k$}
\STATE $k\leftarrow k+1$
\STATE Find a vector $y^k$ satisfying 
$\cA^* y^k \in (\cK^{k-1} )^* \setminus  (\cK^{k-1})^\perp$ and $\<b,y^k\> = 0$
\STATE $\cK^k \leftarrow \cK^{k-1} \cap (\cA^* y^k)^\perp$
\STATE $\cF^k \leftarrow \{ x \in \cK^k : \cA x = b \}$ 
\ENDWHILE
\STATE \textbf{Return} $\cF^k$
\end{algorithmic}
\caption{\FR Process}
\label{algo:FRproc}
\end{algorithm}

For finding solutions to \eqref{eq:AuxSys} for the cones $\Rnp$, the second-order cone and $\Snp$, open source solvers such as SDPT3 \cite{Toh01011999} and SeDuMi \cite{sturm1999using} can be used. 
However, using the interior point method for facial reduction presents certain challenges. Typically, interior point methods terminate once a near-optimal solution is found. For linear programs, an approach is presented to overcome the accuracy issue by leveraging the machine accuracy provided by the simplex method  \cite{ImWolk:22}.
For optimization over the exponential cone $K_{exp}$, Mosek \cite{mosek} provides support, along with many other cone types, including the aforementioned ones.  
As for the optimization over the doubly nonnegative, copositive and  completely positive cones, using the interior point method to solve \eqref{eq:AuxSys} is difficult due to the computationally intractable search direction step.  For problems involving the doubly nonnegative cone, solvers such as \cite{CerulliMartina2021IAfs,doi:10.1080/10556788.2019.1576176} are proposed, which follow the a variant of alternating direction method of multipliers framework.  
As for optimization over the copositive or completely positive cones, these problems are NP-hard \cite{Dur10_copositive}. While approximation methods are available, there is no guarantee that a solution to \eqref{eq:AuxSys} is found upon termination.

Upon the termination of the \FR process \Cref{algo:FRproc} assuming that accurate solutions are obtained), we have
\[
\cF \subseteq \cK \cap (\cA^* y^1)^\perp \cap  \cdots \cap (\cA^* y^{k})^\perp .
\]
Thus \FR process allows an alternative representation of $\cF$ that satisfies strict feasibility:
\[
\cF = \left\{ x \in  \cK \cap (\cA^* y^1)^\perp \cap  \cdots \cap (\cA^* y^{k})^\perp : \cA x = b \right\}.
\]
It is important to note that the \FR process does not alter the points in the set $\cF$, i.e.,
\begin{equation}
\label{eq:same_Fi}
\cF = \cF^1 = \cF^2 =\cdots \cF^k, 
\end{equation}
where each $\cF^i$ is defined in \Cref{algo:FRproc}.
We emphasize that the representation of $\cF^i$ is different from $\cF^j$, when $i\ne j$.

The shortest length of \FR process is called the \textdef{singularity degree} of $\cF$, $\sd(\cF)$, first proposed by Sturm \cite{S98lmi} for the class of semidefinite programs. This implies that \Cref{algo:FRproc} requires at least $\sd(\cF)$ number of iterations. 
Indicated by \eqref{eq:same_Fi}, we have $\cF = \cF^{\sd(\cF)}$ and we use the superscript to distinguish the different representations of the set $\cF$. 
Hence strict feasibility fails for $\cF$, $\sd(\cF) \ne \sd(\cF^{\sd(\cF)})$ and $\sd(\cF^{\sd(\cF)}) =0$.
While it is possible to construct a general conic program that has an arbitrarily large singularity degree, linear programs have $\sd(\cF_{\Rnp}) \le 1$ regardless of the problem dimension.
$\sd(\cF)$ is often used as a measure of the hardness of solving a problem numerically by engaging error bounds; see~\cite{SWW:17,S98lmi}.  


\Cref{example:exponentialCone} below is an illustration of the \FR process.
We leave more details on the \FR algorithm to
\cite{Cheung:2013,DrusWolk:16} for general conic programs,
\cite{Sremac:2019,ImWolk:22} for $\cF_{\Snp}$ and $\cF_{\Rnp}$ and related examples. 

\begin{example}[\FR on the exponential cone]
\label{example:exponentialCone}
Let  
\[
\Kexp := \{ (x,y,z): y>0, z \ge y e^{x/y}  \} \cup 
\{ (x,y,z) : x\le 0, y = 0, z \ge 0 \}
\]
be the exponential cone. Visual illustrations of $\Kexp$ can be found in \cite{LindstromScottB.2023Ebfr,mosek}.
The dual cone of $\Kexp$ is
\[
\Kexp^* = \{ (x,y,z): x<0, ez \ge - x e^{y/x}\} \cup
\{ (x,y,z) : x=0, y\ge 0, z \ge 0 \}.
\] 
With $A=\begin{bmatrix}  0 & 1 & 0 \\ 1 & -1 & 0 \end{bmatrix}$ and $b= \begin{pmatrix} 0 \\ 0 \end{pmatrix}$,
consider the following feasible set
\[
\cF_{\Kexp} := \left\{ (x,y,z) \in \Kexp : 
A (x,y,z)^T = b \right\} = \{(0,0,z) : z\ge 0 \} .
\] 
It is known that $\{(0,0,z): z\ge 0\}$ is a non-exposed extreme ray of $\Kexp$; see~\cite{LindstromScottB.2023Ebfr}.

Let $\lambda^1 = ( 1 , 0 )^T$. 
Then we have $A^* \lambda^1 = (0,1,0)^T \in \Kexp^*$ and \Cref{thm:thmAlt} yields 
that 
\[
\cF_{\Kexp}\subseteq \Kexp \cap (A^* \lambda^1)^\perp 
= \{ (x,y,z) : x\le 0, y = 0, z \ge 0  \}.
\]
Now $\lambda^2 = 
( -1 , -1 )^T $ yields 
$A^* \lambda^2 = ( -1,0,0 )^T$.
Consequently we obtain
\[
\cF_{\Kexp} \subseteq 
\Kexp \cap (A^* \lambda^1 )^\perp \cap (A^* \lambda^2 )^\perp 
= 
\{ (0,0,z) :  z \ge 0  \}.
\]

Note that 
$\lambda^1 = (\lambda^1_1,\lambda^1_2)^T$ with a nonzero second element cannot satisfy \eqref{eq:AuxSys}. 
If $\lambda^1_2 \ne 0$, $A^* \lambda^1  = (\lambda^1_2, \lambda^1_1 - \lambda^1_2, 0)^T$ must belong to  $\{ (x,y,z): x<0, ez \ge - x e^{y/x}\}$. 
However, this is impossible since the third element of  $A^* \lambda^1 $ is equal to $0$ and $\lambda^1_2 < 0$.
Thus $\lambda^1 = (1,0)^T$ is the only vector that satisfies \eqref{eq:AuxSys} up to a scalar multiple. 
We conclude that the singularity degree of this instance is $2$.
\end{example}


It is well-known that \FR applied to $\cF_{\Rnp}$ and $\cF_{\Snp}$ generates $\cF_{\Rkp}$ and $\cF_{\Skp}$, where~$k<n$. 
In plain language, \FR for linear and semidefinite programs produces another semidefinite and linear programs in smaller dimensional spaces.
This can be seen from the facial characterizations \eqref{eq:faceRnp} and \eqref{eq:faceSnp}.
\FR process does not necessarily result in a conic program in the same class, e.g., \cite[Remark 3.10]{HuHao2023Frfs}. This is also observed in  \Cref{example:exponentialCone}; $\Kexp$ is not polyhedral but 
the first \FR iteration in \Cref{example:exponentialCone} produces a polyhedral cone $\Kexp \cap (A^* \lambda^1)^\perp$.

The feasible system $\cF$ is known to hold strict feasibility generically \cite{PatTun:97} and \FR may seem unnecessary. 
However, many practical real-life examples seem to fail strict feasibility, e.g., see~\cite{ImWolk:22,Hu2022robustinteriorpoint}. 
As addressed in \cite{ImWolk:22}, a significant number of instances from the well-known NETLIB collection fails strict feasibility. 
We note that the significance of \FR on $\cF_{\Rnp}$ has only been a recent topic of discussion.

It is important to note that \FR enables the restriction of the domain within which we can operate with $\cA$.
Without \FRp, we cannot restrict the domain of $\cA$ without any further analysis. 
We utilize this observation to discuss the main results in \Cref{sec:MainResult} below.

\subsection{Degeneracy}
\label{sec:Degeneracy}

In this section we introduce a known notion of degeneracy and discuss related results.

\begin{definition}\cite[Definition 3.3.1]{PatakiSVW:99}
\label{def:degen}
Let $\bar{x}\in \cF$. We say that $\bar{x}$ is \textdef{nondegenerate} if 
\[  
\spanl \face(\bar{x}, \cK)^\Delta \cap \cR(\cA^*) = \{0\} .
\]
\end{definition}

Characterizations of \Cref{def:degen} are available for some cones. 
Given $\bar{x} \in \cF_{\Rnp}$, 
let $\supp(\bar{x})$ be the support of $\bar{x}$, $\{i \in \{1,\ldots, n\} : \bar{x}_i \ne 0\}$.
Let $A$ be a matrix representation of $\cA: \Rn \to \Rm$. 
Then $\bar{x}$ is called degenerate if $A(:,\supp(\bar{x}))$ has linearly dependent rows; see~\cite[Example 3.3.1]{PatakiSVW:99}.
We note that the usual degeneracy of a basic feasible solution for the standard linear program agrees with the degeneracy discussed in \cite[Example 3.3.1]{PatakiSVW:99}.
A basic feasible solution $x^\prime$ of the standard linear program is called degenerate 
if there is a basic variable equal to $0$; see \cite[Section 2]{BT97}.
This implies that 
$A (:,\supp(x^\prime))$ (a submatrix of the so-called basis matrix) is a full-column rank matrix with more rows than columns. 
The characterization immediately indicates that $x^\prime$ is degenerate.

A nice characterization of \Cref{def:degen} customized for  $\cF_{\Snp}$ is available as well.
For $\bar{X} \in \cF_{\Snp}$, let 
$\bar{X} = 
\begin{bmatrix} V_1 & V_2  \end{bmatrix} 
\begin{bmatrix} R & 0 \\ 0 & 0 \end{bmatrix}
\begin{bmatrix} V_1^T \\ V_2^T  \end{bmatrix}$ be a spectral decomposition of $\bar{X}$, where $R$ is a positive definite matrix of order $\rank(\bar{X})$.
Let $\<A_i, X\>_{\Sn} = b_i$ be the $i$-th equality of the system $\cA X =b \in \Rm$.
Then \cite[Corollary 3.3.2]{PatakiSVW:99} states 
\begin{equation}
\label{eq:rotate_Ai}
\text{$\bar{X}$ is degenerate $\iff$ } 
\left\{ \begin{bmatrix}
V_1^T A_i V_1 & V_1^T A_i V_2  
\end{bmatrix}  \right\}_{i=1}^m 
\text{ contains linearly dependent matrices.}
\end{equation}
In \cite[Section 2]{IJTWW:23}, the degeneracy of each point in $\cF_{\Snp}$ is realized using the characterization~\eqref{eq:rotate_Ai}.
We briefly explain the steps for recognizing the degeneracy. 
Since $\Snp$ is facially exposed, 
we always obtain 
$\face(\cF,\Snp) = V \bS_{+}^r V^T$ for some full column rank matrix $V \in \R^{n\times r}$.  
Then a solution $y$ to \eqref{eq:AuxSys} is used to detect the linear dependence of the matrices in \eqref{eq:rotate_Ai} after \FRp.
We leave more details to \cite[Section 2]{IJTWW:23}.
The characterizations of degeneracies in the case of $\cF_{\Rnp}$ and $\cF_{\Snp}$ owe to the computationally amenable facial structures \eqref{eq:faceRnp} and \eqref{eq:faceSnp}.
This again emphasizes the importance of understanding facial geometry of cones.

\section{Main Result}
\label{sec:MainResult}

We now present the main results of this paper.

\begin{theorem}[Degeneracy in the absence of strict feasibility]
\label{thm:degenAll}
Suppose that strict feasibility fails for $\cF$.
Then every point in $\cF$ is degenerate. 
\end{theorem}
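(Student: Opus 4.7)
The plan is to apply the Theorem of Alternative (\Cref{thm:thmAlt}) directly and verify that the exposing vector it produces witnesses degeneracy at every feasible point simultaneously. Since strict feasibility fails for $\cF$, \Cref{thm:thmAlt} yields a vector $y \in \Rm$ with
\[
z := \cA^* y \in \cK^* \setminus \{0\}, \qquad \langle b, y\rangle = 0.
\]
This single vector $z$ is the candidate element of $\lin \face(\bar x, \cK)^\Delta \cap \cR(\cA^*)$ that I would produce for \emph{every} $\bar x \in \cF$; membership in $\cR(\cA^*)$ is by construction, and $z\ne 0$.

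Next, I fix an arbitrary $\bar x \in \cF$ and show $z \in \face(\bar x, \cK)^\Delta$. First, pairing gives
\[
\langle z, \bar x\rangle_{\bE} = \langle \cA^* y, \bar x\rangle_{\bE} = \langle y, \cA \bar x\rangle_{\Rm} = \langle y, b\rangle_{\Rm} = 0 .
\]
To upgrade orthogonality at $\bar x$ to orthogonality on the whole face $f := \face(\bar x, \cK)$, I would use the standard fact that $\bar x \in \relint(f)$: for any $x \in f$ there exist $x' \in f$ and $\lambda \in (0,1)$ with $\bar x = \lambda x + (1-\lambda) x'$. Since $z \in \cK^*$ and $x, x' \in \cK$, the inner products $\langle z, x\rangle$ and $\langle z, x'\rangle$ are both nonnegative, while their convex combination equals $\langle z, \bar x\rangle = 0$. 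Hence $\langle z, x\rangle = 0$ for every $x \in f$, i.e.\ $z \in f^\Delta \subseteq \lin f^\Delta$.

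Combining these two observations gives
\[
0 \ne z \in \lin \face(\bar x, \cK)^\Delta \cap \cR(\cA^*),
\]
so $\bar x$ is degenerate by \Cref{def:degen}. Since $\bar x \in \cF$ was arbitrary, every feasible point is degenerate. The argument is uniform across all conic programs — no facial exposedness or self-reproducibility of the cone is needed — because the exposing vector furnished by \Cref{thm:thmAlt} simultaneously certifies that the entire feasible region lies in an exposed face, and hence so does each individual minimal face $\face(\bar x, \cK)$. There is no real obstacle to overcome; the only delicate point is the passage from $\langle z, \bar x\rangle = 0$ to membership in the conjugate face, which the relative-interior argument above handles cleanly.
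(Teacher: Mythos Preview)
Your proof is correct and follows essentially the same route as the paper: invoke \Cref{thm:thmAlt} to obtain $z=\cA^*y\in\cK^*\setminus\{0\}$ with $\langle b,y\rangle=0$, then check that $z$ lies in $\face(\bar x,\cK)^\Delta\cap\cR(\cA^*)$ for every $\bar x\in\cF$. The only difference is cosmetic: the paper asserts $z\in\face(\bar x,\cK)^\Delta$ directly from $z\in\cK^*$ and $\langle z,\bar x\rangle=0$, whereas you spell out the relative-interior argument that justifies this implication.
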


\begin{proof}
Suppose that strict feasibility fails for $\cF$.
Then by \Cref{thm:thmAlt}, there exists $y\in \Rm$ that solves \eqref{eq:AuxSys}.
Note that $\cA^*y \in \cR(\cA^*)$ and $\cA^*y \ne 0$.
Now let $\bar{x} \in \cF$. 
Then 
\[
\<\cA^*y,\bar{x}\> = \<y,\cA \bar{x} \> = \<y,b\> =0 \text{ and } \cA^* y \in \cK^*.
\]
Hence $\cA^*y \in \face(\bar{x}, \cK)^\Delta$.
Therefore we conclude that 
$\cA^* y  \in \spanl \face(\bar{x}, \cK)^\Delta $.
Consequently, by \Cref{def:degen}, $\bar{x}$ is degenerate.
\end{proof}

Now the results in \cite{ImWolk:22,IJTWW:23} can be viewed as a consequence of \Cref{thm:degenAll}.
An immediate application of \Cref{thm:degenAll} yields that
$\cF$ that has a nondegenerate point holds strict feasibility. 
Moreover, $\cF$ that holds strict feasibility always has a nondegenerate point as we see in \Cref{prop:slater_Nondegn} below.

\begin{prop}
\label{prop:slater_Nondegn}
Every strictly feasible point to $\cF$ is nondegenerate.
\end{prop}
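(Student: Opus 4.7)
The plan is to verify the defining condition $\lin \face(\bar{x}, \cK)^\Delta \cap \cR(\cA^*) = \{0\}$ of \Cref{def:degen} directly; the argument will essentially be the contrapositive of the proof of \Cref{thm:degenAll}, invoking \Cref{thm:thmAlt} at the end.

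First I would pin down $\face(\bar{x},\cK)$. Since $\bar{x}$ is strictly feasible we have $\bar{x}\in\relint(\cK)$, and applying the characterization $\face(\cX,\cK)=\face(\hat x,\cK)$ for $\hat x\in\relint(\cX)$ to the singleton $\cX=\{\bar x\}$ gives $\face(\bar x,\cK)=\cK$. So the conjugate face to consider collapses to $\cK^\Delta = \{z\in\cK^* : \<z,x\>=0,\ \forall x\in\cK\}$. A useful observation to record at this point is that $\cK^\Delta$ is automatically a linear subspace: if $z\in\cK^\Delta$, then $\<-z,x\>=0\ge 0$ for all $x\in\cK$, so $-z\in\cK^*$ and $-z\in\cK^\Delta$ as well. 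Hence the $\lin$ operator is immaterial and $\lin\face(\bar x,\cK)^\Delta = \cK^\Delta$.

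The main step is to argue $\cK^\Delta\cap\cR(\cA^*)=\{0\}$ by contradiction: if some $y\in\Rm$ had $\cA^* y\in\cK^\Delta$ with $\cA^* y\ne 0$, then $\cA^* y\in\cK^*\setminus\{0\}$ and, using $\bar x\in\cK$ together with $\cA\bar x=b$,
\[
\<y,b\>=\<y,\cA\bar x\>=\<\cA^* y,\bar x\>=0,
\]
the last equality because $\cA^* y\in\cK^\Delta$ annihilates every element of $\cK$, including $\bar x$. Such a $y$ would solve the alternative system \eqref{eq:AuxSys}, so by \Cref{thm:thmAlt} strict feasibility of $\cF$ would fail, contradicting that $\bar x$ is strictly feasible.

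There is essentially no obstacle here. The only subtlety worth flagging is that $\face(\bar x,\cK)^\Delta$ becomes a subspace precisely in the relative-interior case, which is what makes the $\lin$ operator harmless and allows the clean appeal to \Cref{thm:thmAlt} to close the argument as a mirror image of \Cref{thm:degenAll}.
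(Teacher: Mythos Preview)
Your proposal is correct and follows essentially the same route as the paper: identify $\face(\bar x,\cK)=\cK$, then argue by contradiction that a nonzero element of $\lin\cK^\Delta\cap\cR(\cA^*)$ would furnish a solution to \eqref{eq:AuxSys}, violating strict feasibility via \Cref{thm:thmAlt}. Your explicit observation that $\cK^\Delta$ is already a subspace (so $\lin$ is immaterial) is a slightly cleaner way of handling the point the paper reaches by noting that any $\phi\in\lin\cK^\Delta$ vanishes on all of $\cK$.
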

\begin{proof}
Let $\bar{x}$ $\in \cF$ be a strictly feasible point. 
Then $\face(\bar{x}, \cK) = \cK$. 
If $\interior \cK \ne \emptyset$, then \Cref{def:degen} immediately follows. 
Suppose to the contrary that
\[
0 \ne \phi \in \spanl \  \cK^\Delta \cap \cR(\cA^*)  .
\]
Then $\phi = \cA^* z$ for some $z$. 
Note that $\cA^*z \in \spanl \ \cK^\Delta$ and hence $\<\cA^*z, x\> = 0$, for all $x\in \cK$. 
This yields  $\cA^*z \in \cK^*$ and $0 = \<\cA^*z, \bar{x}\> = \<z,\cA \bar{x}\> = \<z,b\>$.
Therefore $z$ satisfies~\eqref{eq:AuxSys} and this contradicts the strict feasibility assumption.
\end{proof}

\Cref{thm:degenAll}  and \Cref{prop:slater_Nondegn} lead to the following conclusion.
\begin{corollary} Given any feasible set $\cF$, 
strict feasibility fails if, and only if,
every point is degenerate. 
Moreover, strict feasibility holds if, and only if, $\cF$ contains a nondegenerate point.
\end{corollary}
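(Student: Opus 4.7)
The plan is to observe that this corollary is a direct logical consequence of the two preceding results, \Cref{thm:degenAll} and \Cref{prop:slater_Nondegn}, combined via contrapositive. No new technical machinery is needed; the argument amounts to logical bookkeeping on statements already proved.

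For the first biconditional, the forward direction is exactly \Cref{thm:degenAll}: if strict feasibility fails, every point of $\cF$ is degenerate. For the reverse direction, I would argue by contrapositive. If strict feasibility holds, then $\cF$ contains a strictly feasible point $\bar{x}$, and by \Cref{prop:slater_Nondegn} this point is nondegenerate; hence not every point of $\cF$ is degenerate. This handles the ``if and only if'' in the first sentence of the corollary.

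The second biconditional follows the same pattern with the roles swapped. The forward direction is \Cref{prop:slater_Nondegn} directly: any strictly feasible point witnesses the existence of a nondegenerate point in $\cF$. The reverse direction is the contrapositive of \Cref{thm:degenAll}: if strict feasibility fails, every point of $\cF$ is degenerate, so no nondegenerate point exists; equivalently, the existence of a nondegenerate point forces strict feasibility.

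There is no real obstacle, since the two sentences of the corollary are essentially negations of each other (``every point is degenerate'' versus ``some point is nondegenerate'' on the nonempty set $\cF$), and both sides already appear as conclusions of \Cref{thm:degenAll} and \Cref{prop:slater_Nondegn}. The only thing worth flagging is that we implicitly use $\cF \neq \emptyset$ (guaranteed by the standing assumption in \Cref{sec:intro}) so that the quantifier switch between ``every'' and ``some'' is valid. A short two-sentence proof citing the two results should suffice.
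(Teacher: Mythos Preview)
Your proposal is correct and matches the paper's approach exactly: the paper simply states that \Cref{thm:degenAll} and \Cref{prop:slater_Nondegn} ``lead to the following conclusion'' and offers no further proof. Your contrapositive bookkeeping is precisely what is implicit there, and your note about $\cF \neq \emptyset$ is a sensible clarification.
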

If we restrict our attention to the extreme points of $\cF$ only, the converse of \Cref{thm:degenAll} is not true. 
For example, $\cF_{\Rnp}$ that represents the set of doubly stochastic matrices (the feasible region of the linear assignment problem) has a Slater point, but every extreme point is degenerate; see~\cite{ImWolk:22}.

We now proceed to the second main result. 
We first introduce a useful lemma.
We include the proof of \Cref{lem:bexposed} for completeness. 
\begin{lemma}\cite[Lemma 3.6]{WangWolk:22}
\label{lem:bexposed}
Suppose that strict feasibility fails for $\cF$. 
Let $y$ be a solution to~\eqref{eq:AuxSys}.
Then 
\begin{equation}
\label{eq:yisexposing}
\cA (\cK \cap (\cA^*y)^\perp ) = \cA(\cK) \cap y^\perp.
\end{equation}
\end{lemma}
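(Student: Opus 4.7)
The plan is to establish the two set inclusions separately, with the only tool being the adjoint identity $\langle u, \cA^* y\rangle_{\bE} = \langle \cA u, y\rangle_{\Rm}$ together with the definition of the orthogonal complement. No facial or cone structure actually enters, so I expect the argument to be brief.

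First I would dispatch the forward inclusion $\cA(\cK \cap (\cA^* y)^\perp) \subseteq \cA(\cK) \cap y^\perp$. Given $x \in \cK \cap (\cA^* y)^\perp$, it is immediate that $\cA x \in \cA(\cK)$. I would then apply the adjoint identity to obtain $\langle \cA x, y\rangle = \langle x, \cA^* y\rangle = 0$, placing $\cA x$ in $y^\perp$. That finishes this direction.

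Next I would handle the reverse inclusion, which is the only step with any subtlety. Given $b' \in \cA(\cK) \cap y^\perp$, I would pick any preimage $x \in \cK$ with $\cA x = b'$. The natural concern here is that $x$ might fail to lie in $(\cA^* y)^\perp$ and would need to be corrected by adding an element of $\nul(\cA)$. The adjoint identity resolves this for free: $\langle x, \cA^* y\rangle = \langle \cA x, y\rangle = \langle b', y\rangle = 0$, so any preimage $x$ automatically sits in $\cK \cap (\cA^* y)^\perp$. Hence $b' = \cA x$ lies in $\cA(\cK \cap (\cA^* y)^\perp)$, completing the equality.

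I do not anticipate any genuine obstacle. The hypotheses that strict feasibility fails and that $y$ solves \eqref{eq:AuxSys} are used only to guarantee that a nonzero such $y$ exists (so that $(\cA^* y)^\perp$ is a meaningful supporting hyperplane for $\cK$ and the statement is nonvacuous); neither the cone structure of $\cK$ nor the condition $\cA^* y \in \cK^*$ is needed for the identity itself, which is a purely linear-algebraic consequence of the adjoint relation valid for any $y \in \Rm$. The one mildly interesting point is the observation in the $\supseteq$ direction that the preimage $x$ inherits orthogonality to $\cA^* y$ directly from the orthogonality of $b'$ to $y$, without any adjustment.
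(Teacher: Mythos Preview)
Your proposal is correct and follows essentially the same two-inclusion argument as the paper, using the adjoint identity in each direction. Your forward direction is in fact cleaner: the paper writes $\langle w,y\rangle = \langle \cA x, y\rangle = \langle b,y\rangle = 0$, which appears to be a slip (there is no reason $\cA x = b$ for arbitrary $x \in \cK \cap (\cA^* y)^\perp$); your version $\langle \cA x, y\rangle = \langle x, \cA^* y\rangle = 0$ is the intended step.
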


\begin{proof}
Let $w\in \cA(\cK \cap (\cA^*y)^\perp)$. Then $w = \cA x $ for some $x\in \cK \cap (\cA^*y)^\perp$. Clearly $w\in \cA (\cK)$.
Note that $\< w,y \> = \< \cA x, y\> = \<x,\cA^* y\> =0$. Hence $\cA (\cK \cap (\cA^*y)^\perp ) \subseteq \cA(\cK) \cap \{y\}^\perp$ holds.
For the opposite containment, let $w\in \cA(\cK) \cap \{y\}^\perp$.
Then $w = \cA x$ for some $x\in \cK$. 
Furthermore, $0 = \<w,y\> = \< \cA x, y\> = \< x, \cA^* y\> $.
\end{proof}

Recall that $\cA$ is assumed to be surjective throughout this paper. 
We emphasize that $\cA x = b$ alone does not have redundant equalities.
\Cref{thm:implicit_red} below shows that
$\cA$ is no longer surjective 
when the domain of $\cA$ is restricted by the orthogonal complement of $\cA^* y$, where $y$ is a solution to \eqref{eq:AuxSys}.
That is, the constraint system $\cF$ contains \textdef{implicit redundant} constraints. 

\begin{theorem}[Implicit loss of surjectivity in the absence of strict feasibility]
\label{thm:implicit_red}
Suppose strict feasibility fails for $\cF$. 
Let $\{y^i\}_{i=1}^{k}$ be the set of vectors obtained by the \FR process.
Let $\bar{\cK} =  \cK \cap_{i=1}^{k} (\cA^* y^i )^\perp$ and let $\bar{\bE} = \spanl(\bar{\cK})$.
Then the map $\bar{\cA} : \bar{\bE} \to \Rm$ is not surjective. 
In other words, the equality constraint system in 
$\{x\in \bar{\cK} : \cA x = b \}$ contains redundant constraints. 
\end{theorem}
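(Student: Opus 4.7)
The plan is to exhibit a nonzero $y \in \Rm$ perpendicular to $\cA(\bar{\bE})$, which immediately forces $\cA(\bar{\bE}) \subsetneq \Rm$ and hence non-surjectivity. The exposing vectors $y^1, \ldots, y^k$ produced by the FR process serve this purpose directly. By construction one has $\bar{\cK} \subseteq (\cA^* y^i)^\perp$ for every $i$, and since orthogonality to a fixed vector is preserved under linear combinations, this inclusion passes from $\bar{\cK}$ to its span, giving $\bar{\bE} \subseteq (\cA^* y^i)^\perp$. By the adjoint identity $\<\cA^* y^i, x\> = \<y^i, \cA x\>$, this is exactly $y^i \perp \cA(\bar{\bE})$. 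Since $\cA^* y^i \ne 0$ by~\eqref{eq:AuxSys}, we have $y^i \ne 0$, and so $\cA(\bar{\bE})$ sits inside the proper subspace $(y^i)^\perp$ of $\Rm$, establishing non-surjectivity of $\cA : \bar{\bE} \to \Rm$.

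For the interpretation as implicit redundancy of the equality system, I would combine the above with the second property $\<b, y^i\> = 0$ from~\eqref{eq:AuxSys}. For every $x \in \bar{\bE}$, the scalar identity $\<y^i, \cA x - b\> = \<y^i, \cA x\> - \<y^i, b\> = 0$ holds \emph{trivially}, so the nontrivial linear combination of the $m$ scalar equations in $\cA x = b$ with coefficient vector $y^i$ collapses to $0 = 0$ on $\bar{\bE}$. This linear dependence among the equations once the domain is restricted to $\bar{\bE}$ is the precise meaning of an implicit redundant constraint.

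I do not anticipate a substantive obstacle here; the entire argument is a direct formal consequence of how $\bar{\cK}$ and the exposing vectors $y^i$ are constructed, so \Cref{lem:bexposed} is not strictly needed for the statement as written, although iterating it yields the explicit description $\cA(\bar{\cK}) = \cA(\cK) \cap (y^1)^\perp \cap \cdots \cap (y^k)^\perp$, which could be used to quantify how many independent implicit redundancies arise.
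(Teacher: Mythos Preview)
Your proposal is correct and follows essentially the same approach as the paper: both exhibit $y^1$ as a nonzero vector orthogonal to $\cA(\bar{\bE})$, forcing the image into the proper hyperplane $(y^1)^\perp$. Your argument is in fact slightly more direct, since you bypass \Cref{lem:bexposed} by noting that $(\cA^* y^1)^\perp$ is already a subspace containing $\bar{\cK}$ and hence $\bar{\bE}$, whereas the paper first invokes the lemma to get $\cA(\bar{\cK}) \subseteq \cA(\cK) \cap (y^1)^\perp$ before drawing the same conclusion.
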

\begin{proof}
Suppose strict feasibility fails for $\cF$.
Let $\bar{\cK}$ be the closed convex cone obtained by the \FR process, i.e., 
$\bar{\cK} := \cK \cap_{i=1}^k (\cA^* y^i)^\perp$, where $\{y^i\}_{i=1}^k$ is obtained by solving \eqref{eq:AuxSys}.
Then by \Cref{lem:bexposed}, $y^1\in \Rm$ satisfies 
\eqref{eq:yisexposing}. 
Thus the containment below follows:
\[
\cA (\bar{\cK}) \subseteq \cA (\cK \cap (\cA^*y^1)^\perp ) = \cA(\cK) \cap (y^1)^\perp .
\]
Note that $y^1$ is nonzero since $\cA^* y^1 \ne 0$.
Hence the range of $\bar{\cA}$ cannot span~$\Rm$.
\end{proof}

\Cref{thm:implicit_red} leads to the notion of \textdef{implicit problem singularity}, $\ips$,
discussed \cite{ImWolk:22,HaesolIm:2022,IJTWW:23}.
We state the definition of $\ips$ for a general conic program.
\begin{definition}
\label{def:ips}
Let $\cK \cap (\cA^*y^1)^\perp \cap \cdots \cap ( \cA^*y^{k})^\perp $ be the cone obtained by the \FR process.
The implicit problem singularity of $\cF$, $\ips(\cF)$, is the number of  redundant equalities in the system 
\[
\left\{ x \in \cK \cap (\cA^*y^1)^\perp \cap \cdots \cap ( \cA^*y^{k})^\perp  : \cA x = b \right\}.
\]
\end{definition}
We discuss notions related to \Cref{def:ips} and  their usages in \Cref{sec:Discussions} below.

\begin{remark}
The implicit redundancies provide an interesting implication to 
the minimal representation of the affine space discussed in \cite{ScTuWominimal:07}.
Let $\cN(\cA)$  be the null-space of $\cA$. Then, given~$\hat{x} \in \cL = \{x \in \bE : \cA x = b\}$, we obtain
$\cL = \hat{x} + \cN(\cA) $. 
Since $\cK \cap \cL = \face(\cF, \cK) \cap \cL$ and  $(\face(\cF,\cK)-\face(\cF,\cK)) \cap \cK = \face(\cF,\cK)$,
 \cite{ScTuWominimal:07} identifies the equality   
\[
\cF 
= ( \hat{x} +\cN(\cA) ) \cap \cK  
= ( \hat{x} + \cN(\cA)) \cap (\face(\cF,\cK)-\face(\cF,\cK)) \cap \cK .
\]
Then \cite[equation (2.18)]{ScTuWominimal:07} 
establishes $\cL_{DM} = ( \hat{x} + \cN(\cA)) \cap (\face(\cF,\cK)-\face(\cF,\cK))$ as `the minimal subspace representation' of $\cL$.
\Cref{thm:implicit_red}  and  \Cref{def:ips}
certify that~$\cL_{DM}$ indeed has a smaller dimension than $\cL$ when strict feasibility fails for $\cF$.
\end{remark}

\section{Discussions}
\label{sec:Discussions}

In this section we connect the main results to some topics in the literature, including the 
notions of different singularities and their usages, ill-conditioning that arises in the interior point methods,
and differentiability of the optimal value function.

\subsection{Other Singularity Notions}

\Cref{thm:implicit_red} indicates that the image under $\cA$ with its domain restricted by the exposing vectors from the \FR process cannot span $\Rm$. 
Yet, it does not specify how different the dimension of the image is from $\Rm$ in the absence of strict feasibility. 
We aim to provide a bound for the lost dimension.

As a related definition to the singularity degree,
a notion of \textdef{max-singularity degree} is proposed in \cite{ImWolk:22}.
The max-singularity degree of $\cF$, $\maxsd(\cF)$, is the longest nontrivial \FR iterations for $\cF$. 
We discuss some properties related to 
these singularity notions.

\begin{proposition}\cite[Theorem 1]{LiuMinghui2018Edas}
\label{prop:yi_linindep}
Suppose that $\cF$ fails strict feasibility.
Let $\{y^i\}_{i=1}^k$ generated by solving~\eqref{eq:AuxSys}.
Then the vectors in $\{y^i\}_{i=1}^k$ are linearly independent.
\end{proposition}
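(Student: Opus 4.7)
The plan is to argue by contradiction: assume there is a nontrivial linear relation among $\{y^i\}_{i=1}^k$, and derive that some step of facial reduction fails to strictly shrink the cone, which contradicts the role of that step in the FR process. The key geometric fact I will exploit is that, by construction, $\cA^* y^i$ annihilates every point of the cone $\cK^{j-1}$ whenever $i<j$.

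First I would use the surjectivity assumption on $\cA$, which makes $\cA^*$ injective; this lets me transfer a linear dependence among $\{y^i\}$ to one among $\{\cA^* y^i\}$. Supposing such a dependence exists, I would pick the smallest $j$ with
\[
\cA^* y^j \in \spanl\{\cA^* y^1,\ldots,\cA^* y^{j-1}\}
\]
and write $\cA^* y^j = \sum_{i<j}\alpha_i \cA^* y^i$. By the FR construction, $\cK^{j-1} = \cK \cap_{i<j}(\cA^* y^i)^\perp$, so any $x\in\cK^{j-1}$ satisfies $\<\cA^* y^i,x\>=0$ for all $i<j$. Taking the inner product against this $x$ then gives $\<\cA^* y^j,x\>=\sum_{i<j}\alpha_i\<\cA^* y^i,x\>=0$, i.e., $\cK^{j-1}\subseteq (\cA^* y^j)^\perp$. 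Hence
\[
\cK^j = \cK^{j-1}\cap(\cA^* y^j)^\perp = \cK^{j-1},
\]
so the $j$-th facial reduction step is vacuous. Equivalently, $\cA^* y^j$ sits in the lineality space $\spanl(\cK^{j-1})^\perp$ of $(\cK^{j-1})^*$, which is exactly the locus where intersecting with $(\cA^* y^j)^\perp$ removes nothing from $\cK^{j-1}$.

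The main obstacle, and the step I would spell out most carefully, is closing off this into an actual contradiction, since the proposition as stated would be false if one were allowed to repeat a useless $y^i$ (for instance, taking $y^1=y^2=\cdots$ trivially satisfies \eqref{eq:AuxSys} at every iteration). The contradiction must therefore invoke the implicit standing assumption on the FR process, consistent with the definition of $\sd(\cF)$ as the \emph{shortest} such process and with $\maxsd(\cF)$ being the longest \emph{nontrivial} FR, that each iteration strictly shrinks the cone, i.e., $\cK^i \subsetneq \cK^{i-1}$. Under that convention, $\cK^j=\cK^{j-1}$ contradicts the nontriviality of the $j$-th step and rules out any linear dependence among the $y^i$'s.
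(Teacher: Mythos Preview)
Your proof is correct and follows the same essential idea as the paper's: if $\cA^* y^j$ were in the span of the earlier $\cA^* y^i$'s, it would vanish on all of $\cK^{j-1}$ and the $j$-th FR step would be vacuous, contradicting the nontriviality of that step (which you rightly make explicit, whereas the paper's two-case analysis leaves it implicit). One small note: you do not need injectivity of $\cA^*$ to pass a linear dependence from $\{y^i\}$ to $\{\cA^* y^i\}$ --- linearity alone gives that direction --- so the appeal to surjectivity of $\cA$ is harmless but unnecessary.
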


Note that the property stated in \Cref{prop:yi_linindep} holds for any \FR process. 
Thus \Cref{coro:maxsdIps} below follows.

\begin{corollary}
\label{coro:maxsdIps}
Suppose that strict feasibility fails for $\cF$ and let $\bar{\cK}$ be the cone obtained by the \FR process. Then
$\cA(\spanl \bar{\cK} )$ generates at most $m-\maxsd(\cF)$ dimensional space. 
Moreover, $\{ x\in \bar{\cK} : \cA x = b \}$ contains at least $\maxsd(\cF)$ number of implicitly redundant constraints. 
\end{corollary}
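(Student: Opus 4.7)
The plan is to combine \Cref{lem:bexposed} (applied iteratively) with the linear independence established in \Cref{prop:yi_linindep} to bound the dimension of the image $\cA(\spanl \bar\cK)$ from above. To obtain the bound involving $\maxsd(\cF)$, I would work with a longest nontrivial \FR sequence $\{y^i\}_{i=1}^{\maxsd(\cF)}$ (rather than an arbitrary \FR sequence of length $k$), since $\maxsd(\cF)$ is by definition the length of such a chain; the resulting cone $\bar\cK$ is a subset of the cone obtained at the end of any shorter \FR process, so the bound it produces is the sharpest.

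First I would establish, by induction on the \FR iteration count, that
\[
\cA(\cK^{i}) \;=\; \cA(\cK) \cap (y^1)^\perp \cap \cdots \cap (y^{i})^\perp,
\]
where $\cK^{i} := \cK \cap (\cA^* y^1)^\perp \cap \cdots \cap (\cA^* y^{i})^\perp$. The base case $i=1$ is exactly \Cref{lem:bexposed}. For the inductive step, applying \Cref{lem:bexposed} with the cone $\cK^{i-1}$ in place of $\cK$ (which is legitimate, since \Cref{lem:bexposed} is formulated for an arbitrary closed convex cone and $y^{i}$ satisfies the hypotheses of \Cref{thm:thmAlt} with respect to $\cK^{i-1}$) yields $\cA(\cK^{i}) = \cA(\cK^{i-1}) \cap (y^{i})^\perp$, which combined with the inductive hypothesis closes the loop. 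Specializing to $i = \maxsd(\cF)$ gives $\cA(\bar\cK) \subseteq (y^1)^\perp \cap \cdots \cap (y^{\maxsd(\cF)})^\perp$.

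Next I would take spans. Since $\cA(\spanl \bar\cK) = \spanl \cA(\bar\cK)$ (linear images commute with span), the preceding inclusion gives
\[
\cA(\spanl \bar\cK) \;\subseteq\; \spanl\{y^1,\ldots,y^{\maxsd(\cF)}\}^{\perp}.
\]
By \Cref{prop:yi_linindep}, the vectors $y^1,\ldots,y^{\maxsd(\cF)}$ are linearly independent in $\Rm$, so the orthogonal complement on the right has dimension exactly $m - \maxsd(\cF)$. Hence $\dim \cA(\spanl \bar\cK) \le m - \maxsd(\cF)$, which is the first assertion. The second assertion then follows immediately: the linear map $\cA : \spanl \bar\cK \to \Rm$ has rank at most $m - \maxsd(\cF)$, so at least $\maxsd(\cF)$ of the original $m$ scalar equalities in $\cA x = b$ are linear combinations of the remaining ones on $\spanl \bar\cK$, i.e.\ are implicitly redundant in the sense of \Cref{def:ips}.

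The only delicate point is ensuring that \Cref{lem:bexposed} can indeed be applied at each \FR iteration with $\cK^{i-1}$ playing the role of the ambient cone, and that the hypotheses (a failure of strict feasibility, together with the existence of $y^{i}$ satisfying \eqref{eq:AuxSys} relative to $\cK^{i-1}$) are exactly what is guaranteed by the \FR loop in \Cref{algo:FRproc}. Once this is checked, the argument is purely dimensional and requires no further machinery.
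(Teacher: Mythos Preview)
Your proposal is correct and follows essentially the same approach as the paper: both arguments specialize to an \FR sequence of length $\maxsd(\cF)$, apply \Cref{lem:bexposed} iteratively to obtain $\cA(\bar\cK) = \cA(\cK) \cap (y^1)^\perp \cap \cdots \cap (y^{\maxsd(\cF)})^\perp$, and then invoke \Cref{prop:yi_linindep} for the dimension count. Your write-up is simply more explicit about the induction and the passage to spans than the paper's terse version.
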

\begin{proof}
Let $\{y^i\}_{i=1}^{\maxsd(\cF)}$ be the sequence of vectors obtained by the \FR iterations by solving~\eqref{eq:AuxSys}.
Then \Cref{lem:bexposed} implies that
\[
\cA (\bar{\cK} ) 
= \cA \left( 
 \cK \cap (\cA^* y^1)^\perp \cap  \cdots (\cA^* y^{\maxsd(\cF)})^\perp 
 \right) = \cA(\cK) \cap \{y^1\}^\perp \cap \cdots \{y^{\maxsd(\cF)} \}^\perp.
\]
Finally, \Cref{prop:yi_linindep}
implies that $\cA(\spanl\bar{\cK})$ spans at most $m-\maxsd(\cF)$ dimensional space, since $\{y^i\}$ are linearly independent by \Cref{prop:yi_linindep}.
\end{proof}

\Cref{coro:maxsdIps} gives rise to the following relationship:
\[
\ips(\cF) \ge \maxsd (\cF) \ge \sd(\cF).
\]
This motivates the following question: how different can the $\ips, \maxsd$, and $\sd$ be?
Various instances are presented for $\cF_{\Rnp}$ and $\cF_{\Snp}$ in the literature;
$\sd(\cF_{\Snp}) = \maxsd(\cF_{\Snp}) < \ips(\cF_{\Snp})$ \cite[Example 3.2.13]{HaesolIm:2022};
$\sd(\cF_{\Snp}) = \maxsd(\cF_{\Snp}) = \ips(\cF_{\Snp})$ \cite[Example 4.2.6]{Sremac:2019}; 
and $\sd(\cF_{\Rnp}) =1 < \ips (\cF_{\Rnp}) = 275$ \cite[Section 4.2.2]{ImWolk:22}.
\Cref{example:exponentialCone} above shows that
$\sd(\cF_{\Kexp}) = \maxsd(\cF_{\Kexp}) = \ips(\cF_{\Kexp}) = 2$.

The implicit redundancies can be used to tighten bounds that engage the number of linear constraints.
An interesting usage of $\ips$ is shown for $\cF_{\Rnp}$ and $\cF_{\Snp}$ in the literature.
For the case of $\cF_{\Rnp}$, $\ips$ is used for measuring the degree of degeneracy of a basic feasible solution \cite[Corollary 4.1.12]{HaesolIm:2022}.
More specifically, every basic feasible solution has at most $m-\ips(\cF_{\Rnp})$ number of nonzero basic variables.  
This also translates to 
\begin{equation}
\label{eq:bfsips}
\text{
every basic feasible solution has at least $\ips(\cF_{\Rnp})$ degenerate basic variables. 
}
\end{equation}
In the case of $\cF_{\Snp}$, $\ips$ is used to tighten the Barvinok-Pataki bound \cite[Theorem 3.2.7]{HaesolIm:2022}:
\begin{equation}
\label{eq:extSnpIPS}
\text{every extreme point  } X\in \cF_{\Snp} \text{ satisfies }
\rank(X) (\rank(X)+1)/2 \le m - \ips(\cF_{\Snp}) .
\end{equation}
We observe that a large $\ips(\cF_{\Snp})$ provides an especially useful bound since the term on the left-hand-side of the inequality \eqref{eq:extSnpIPS} is quadratic in $\rank(X)$.

We now aim to generalize \eqref{eq:bfsips} and \eqref{eq:extSnpIPS} to an arbitrary conic program.
We first present a lemma. 
Note that \Cref{lemma:dimface_dimYrelation} can be used to derive the well-known Barvinok-Pataki bound \cite{bar01,GPat:95}. 
\begin{lemma}\cite[Theorem 3.3.1]{PatakiSVW:99}
\label{lemma:dimface_dimYrelation}
Let $\bar{x} \in \cF$.  Then
\[ 
\dim [ \face (\bar{x}, \cF)  ]
= \dim [ \face(\bar{x}, \cK) ] - m + \dim [ \face(\bar{x}, \cK)^\perp \cap \cR(\cA^*) ] .
\]
\end{lemma}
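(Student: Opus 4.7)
The plan is to first identify the minimal face of $\cF$ containing $\bar{x}$ in terms of the minimal face $F := \face(\bar{x}, \cK)$, then apply rank--nullity to the restriction of $\cA$ to $\lin F$, and finally translate the restricted rank into the dimension of $\face(\bar{x},\cK)^{\perp} \cap \cR(\cA^{*})$ via surjectivity of $\cA$.

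First I would establish the identity
\[
\face(\bar{x}, \cF) \;=\; F \cap \cL.
\]
That $F \cap \cL$ is a face of $\cF = \cK \cap \cL$ is routine from the definition of a face, since $\cL$ is affine. To see it equals the minimal face of $\cF$ through $\bar x$, the key observation is that $\bar{x} \in \relint F$ (by definition of the minimal face of $\cK$ containing $\bar{x}$), and a small neighborhood of $\bar x$ in $\lin F$ lies in $F$; intersecting this neighborhood with $\cL$ shows $\bar{x} \in \relint(F \cap \cL)$, which forces $F \cap \cL = \face(\bar{x}, \cF)$.

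Next, write $\cL = \bar{x} + \nul(\cA)$ and let $V := \lin F$. Since $\bar{x} \in \relint F \subset V$, a neighborhood of $\bar{x}$ inside $F \cap \cL$ is precisely $\bar{x} + (V \cap \nul\cA) \cap U$ for some open $U$ around the origin, so
\[
\dim \face(\bar x, \cF) \;=\; \dim\bigl(V \cap \nul(\cA)\bigr) \;=\; \dim \nul\bigl(\cA|_V\bigr).
\]
Applying the rank--nullity theorem to the linear map $\cA|_V \colon V \to \Rm$ yields
\[
\dim \face(\bar x, \cF) \;=\; \dim V - \rank(\cA|_V) \;=\; \dim \face(\bar{x}, \cK) - \dim \cA(V).
\]

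The remaining step is to replace $\dim \cA(V)$ by $m - \dim\bigl(\face(\bar{x},\cK)^{\perp} \cap \cR(\cA^{*})\bigr)$. Inside $\Rm$, one has $\dim \cA(V) = m - \dim \cA(V)^{\perp}$, and $y \in \cA(V)^{\perp}$ iff $\cA^{*} y \in V^{\perp}$. Because $\cA$ is assumed surjective, $\cA^{*}$ is injective, so $\cA^{*}$ maps $\cA(V)^{\perp}$ bijectively onto $\cR(\cA^{*}) \cap V^{\perp}$; this gives
\[
\dim \cA(V)^{\perp} \;=\; \dim\bigl(\cR(\cA^{*}) \cap V^{\perp}\bigr) \;=\; \dim\bigl(\cR(\cA^{*}) \cap \face(\bar{x},\cK)^{\perp}\bigr).
\]
Combining the two displays yields the claimed formula. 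The main conceptual obstacle is the first step, namely verifying that taking the minimal face commutes with intersecting by the affine subspace $\cL$; once $\bar{x} \in \relint(F \cap \cL)$ is secured, the rest is linear algebra and the surjectivity hypothesis on $\cA$.
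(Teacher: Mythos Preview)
The paper does not supply its own proof of this lemma; it is quoted from Pataki \cite[Theorem~3.3.1]{SaVaWo:97} and used as a black box in the proof of \Cref{thm:tighenUsingIPS}. So there is no in-paper argument to compare against.

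Your argument is correct and is essentially the standard one. The only place worth tightening is the first step, where you assert $\bar{x}\in\relint(F\cap\cL)$: you should make explicit that the affine hull of $F\cap\cL$ equals $V\cap\cL=\bar{x}+(V\cap\nul\cA)$, which follows because the relatively open ball $B\subset F$ around $\bar{x}$ in $V$ gives $B\cap\cL\subset F\cap\cL$, and $B\cap\cL$ is open in $V\cap\cL$. Once that affine hull is identified, both $\bar{x}\in\relint(F\cap\cL)$ and the dimension formula $\dim\face(\bar{x},\cF)=\dim(V\cap\nul\cA)$ are immediate, and the rest is exactly the rank--nullity plus adjoint computation you wrote down.
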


\begin{theorem}
\label{thm:tighenUsingIPS}
Let $\bar{x}$ be an extreme point of $\cF$. Then 
\[
\dim \face(\bar{x}, \cK) 
\le m - \ips(\cF) .
\]
\end{theorem}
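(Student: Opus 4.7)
The plan is to combine \Cref{lemma:dimface_dimYrelation} with a direct dimension count on the set $\face(\bar{x},\cK)^\perp \cap \cR(\cA^*)$. Since $\bar{x}$ is an extreme point of $\cF$, the face $\face(\bar{x},\cF)$ is the singleton $\{\bar{x}\}$, so $\dim\face(\bar{x},\cF)=0$. Specialising \Cref{lemma:dimface_dimYrelation} to this $\bar{x}$ gives
\[
\dim\face(\bar{x},\cK) \;=\; m \;-\; \dim\bigl[\face(\bar{x},\cK)^\perp \cap \cR(\cA^*)\bigr],
\]
so the statement reduces to establishing the lower bound $\dim\bigl[\face(\bar{x},\cK)^\perp \cap \cR(\cA^*)\bigr] \ge \ips(\cF)$.

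To produce that lower bound I would exhibit $\ips(\cF)$ linearly independent vectors sitting in the indicated intersection. Let $\bar{\cK}$ be the reduced cone obtained from the \FR process as in \Cref{def:ips}. By \Cref{def:ips}, the number of implicitly redundant equalities in $\{x\in \bar{\cK}:\cA x=b\}$ is $\ips(\cF)$, which is precisely to say that $\cA(\spanl \bar{\cK})$ has codimension $\ips(\cF)$ in $\Rm$. Choosing a basis $y^{1},\dots,y^{\ips(\cF)}$ of $\cA(\spanl \bar{\cK})^\perp\subseteq \Rm$, each vector $\cA^{*}y^{j}$ annihilates $\spanl \bar{\cK}$, i.e.~lies in $(\spanl \bar{\cK})^\perp$. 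Because $\cA$ is surjective, $\cA^{*}$ is injective, and therefore $\{\cA^{*}y^{j}\}_{j=1}^{\ips(\cF)}$ remains linearly independent inside $\cR(\cA^{*})$.

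The last ingredient is the containment $(\spanl \bar{\cK})^\perp \subseteq \face(\bar{x},\cK)^\perp$, which amounts to showing $\face(\bar{x},\cK)\subseteq \bar{\cK}$. This is the step I expect to be the main obstacle, since it requires the observation that $\bar{\cK}$ is not merely an intersection of hyperplanes but is itself a face of $\cK$. At each \FR iteration the multiplier satisfies $\cA^{*}y^{i}\in (\cK^{i-1})^{*}$, so $\cK^{i-1}\cap(\cA^{*}y^{i})^{\perp}$ is an exposed face of $\cK^{i-1}$, and by transitivity of the face relation a face of $\cK$. Intersections of faces remain faces, so $\bar{\cK}$ is a face of $\cK$; since $\cF\subseteq \bar{\cK}$ by construction, the minimality of $\face(\cF,\cK)$ yields $\face(\cF,\cK)\subseteq \bar{\cK}$, and then $\face(\bar{x},\cK)\subseteq \face(\cF,\cK)\subseteq \bar{\cK}$ gives the desired inclusion. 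Substituting the resulting inequality $\dim[\face(\bar{x},\cK)^\perp\cap \cR(\cA^{*})]\ge \ips(\cF)$ into the identity above completes the argument.
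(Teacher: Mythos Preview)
Your proposal is correct and shares the same skeleton as the paper's proof: both invoke \Cref{lemma:dimface_dimYrelation} together with extremality of $\bar{x}$ to reduce the statement to the lower bound $\dim[\face(\bar{x},\cK)^\perp\cap\cR(\cA^*)]\ge \ips(\cF)$, and both ultimately rely on the containment $\face(\bar{x},\cK)\subseteq\face(\cF,\cK)\subseteq\bar{\cK}$.

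The difference is in how that lower bound is obtained. You produce it directly: the $\ips(\cF)$ vectors $y^j$ spanning $\cA(\spanl\bar{\cK})^\perp$ are pushed through the injective adjoint $\cA^*$ to yield $\ips(\cF)$ independent elements of $(\spanl\bar{\cK})^\perp\cap\cR(\cA^*)\subseteq\face(\bar{x},\cK)^\perp\cap\cR(\cA^*)$. The paper instead passes through a longer chain, bounding $\dim[\face(\cF,\cK)^\perp\cap\cR(\cA^*)]$ below by $\dim[\cA(\bar{\cK}^\perp)]$ and then invoking the subadditivity $m\le\dim[\cA(\bar{\cK})]+\dim[\cA(\bar{\cK}^\perp)]$ to connect back to $\ips(\cF)$. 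Your route is shorter and more transparent; moreover, you make explicit the fact that $\bar{\cK}$ is a face of $\cK$ (via transitivity of the face relation along the \FR iterates), which is precisely what justifies $\face(\cF,\cK)\subseteq\bar{\cK}$ and hence $\bar{\cK}^\perp\subseteq\face(\cF,\cK)^\perp$, a step the paper uses in its chain but does not spell out.
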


\begin{proof}
Let $\{y^i\}_{i=1}^k$ be a set of vectors obtained by the \FR iterations.
Let 
\[
\bar{\cK} := \cK \cap (\cA^* y^1)^\perp \cap \cdots \cap (\cA^* y^{k})^\perp.
\]
Let $\cN(\cA)$ be the null-space of $\cA$.
Note that 
\begin{equation}
\label{Adimbreakup}
m = \dim \left(\cA \left( \bar{\cK} \cup \bar{\cK}^\perp \right) \right)
\le \dim (\cA \left( \bar{\cK}) \right) +  \dim  \left( \cA ( \bar{\cK}^\perp) \right) .
\end{equation}
Since  
\[ 
\begin{array}{rcl}
\cA [ \bar{\cK}^\perp ]
& = & 
\cA \left[  \left( \bar{\cK}^\perp \cap \cR(\cA^*)\right) \cup \cN(\cA)  \right] \\
& \subseteq & \cA  \left[ \bar{\cK}^\perp \cap \cR(\cA^*)\right] +  \cA\left[ \cN(\cA) \right] \\
& = & \cA  \left[ \bar{\cK}^\perp \cap \cR(\cA^*)\right],
\end{array}
\]
we have 
\begin{equation}
\label{eq:proof_aid1}
\begin{array}{rcl}
\dim [\cA ( \bar{\cK}^\perp) ]  & \le & 
\dim [ \cA  ( \bar{\cK}^\perp \cap \cR(\cA^*) ) ] \\
&   \le &
\dim [ \bar{\cK}^\perp \cap \cR(\cA^*)  ]  \\
& \le&
\dim [ \face(\cF,\cK)^\perp \cap \cR(\cA^*)  ] .
\end{array}
\end{equation}
Since $\bar{x}$ is an extreme point of $\cF$, $\dim [\face (\bar{x}, \cF)] =0$.
Then \Cref{lemma:dimface_dimYrelation} implies
\begin{equation}
\label{eq:dimfacexK_almostubd}
\dim [\face(\bar{x}, \cK) ]
=   m - \dim [\face(\bar{x}, \cK)^\perp \cap \cR(\cA^*) ] .
\end{equation}
Note that
\[
\begin{array}{rclll}
&\face(\cF,\cK)^\perp &\subseteq& \face(\bar{x}, \cK)^\perp   \\
\implies& 
\dim \left[ \face(\cF,\cK)^\perp  \cap \cR(\cA^*)\right] & \le & 
\dim \left[ \face(\bar{x}, \cK)^\perp  \cap \cR(\cA^*) \right] .
\end{array}
\]
Then we obtain
\[
\begin{array}{rcll}
\dim [\face(\bar{x}, \cK) ]
&\le &  m - \dim \left[ \face(\cF,\cK)^\perp  \cap \cR(\cA^*)\right]  &  \text{by \eqref{eq:dimfacexK_almostubd}} \\
&\le& m - \dim [ \cA ( \bar{\cK}^\perp)] & \text{by \eqref{eq:proof_aid1}}\\
& \le& \dim [ \cA(\bar{\cK} )] & \text{by \eqref{Adimbreakup}} \\
& \le & m- \ips(\cF).
\end{array}
\]
\end{proof}

\Cref{thm:tighenUsingIPS} can be used for cones for which facial dimensions are known. 
For example, $\dim \face(\bar{x}, \Rnp) = |\supp(\bar{x})|$.
Thus \eqref{eq:bfsips} can be viewed as a consequence of \Cref{thm:tighenUsingIPS}.
It is well known that 
$\dim \face(\bar{X}, \Snp) = \rank(\bar{X})(\rank(\bar{X})+1)/2$ and
\eqref{eq:extSnpIPS} also follows from \Cref{thm:tighenUsingIPS}.
Let $\COPn:= \{ X \in \Sn: \<y,Xy\>_{\Rn}, \forall y\in \Rnp \}$ be the copositive cone.
The dimensions of the so-called maximal faces of the copositive cone are identified. 
A maximal face $f$ of $\COPn$ is characterized by 
$f = \{ X \in \COPn : v^T X v =0 \}$ for some $v \in \Rnp\setminus \{0\}$; see~\cite[Theorem 5.5]{Dickinson:10}.
Moreover, $\dim f = \frac{1}{2}n(n+1)- |\supp(v)|$.
Then by \Cref{thm:tighenUsingIPS}, we conclude that 
an extreme point $\bar{X}$ of $\cF_{\COPn}$, where $\face(\bar{X}, \COPn)$ is maximal, satisfies
\[
\frac{1}{2}n(n+1) - |\supp(v)| \le m-\ips(\cF).
\]

\subsection{Interior Point Methods}

Interior point methods are the most popular class of algorithms for solving a linear conic program
\begin{equation}
\label{eq:standardPrimal}
\min_x \{ \<c,x\> : \cA x = b, x \in \cK  \ \} .
\end{equation}
A general framework of the interior point method 
applicable to the standard conic program is discussed in \cite{MR2436012,permfribergandersen}. 
We aim to identify a repercussion that arises when either the primal path-following interior point method or the extended dual embedding is applied to an instance that fails strict feasibility.
Throughout the interior point method, solving a series of linear systems is essential.
In practice, it is observed that these linear systems often become ill-conditioned as the iterates approach to an optimal point. 
We attribute this phenomenon to degeneracy arising from the lack of strict feasibility in semidefinite programming.

\paragraph{Limiting Behaviour of the Self-Concordant Barrier based Interior Point Method}
We follow the steps presented in \cite{MR2436012}. 
First we construct the so-called 
logarithmically homogeneous self-concordant barrier (LHSCB) $F: \interior(\cK) \to \R$ for the cone $\cK$. 
For $t >0$, consider 
\[
(PB_t) \quad \min_x \{ \  t \<c,x\> + F(x) : Ax = b, x \in \cK \ \} .
\]
The barrier function $F$ maintains the variable $x$ in $\interior (\cK)$. 
The barrier functions 
$F(x) = -\sum_j \ln x_j$,  
and $F(X) = -\ln \det X$ are presented as examples 
for linear and semidefinite programs, respectively.
The first-order optimality conditions for $(PB_t)$ are
\begin{equation}
\label{eq:iptKKT}
\begin{array}{l}
\cA^* y + s = c \\
\cA x = b \\
\nabla F (x) + ts = 0 , x\in \interior (\cK), s \in \interior(\cK^*)
\end{array}
\end{equation}
The Newton step for solving the optimality conditions \eqref{eq:iptKKT} can be computed by solving 
\begin{equation}
\label{eq:iptKKT2}
\begin{array}{l}
\cA^* \Delta y + \Delta s = 0 \\
\cA \Delta x = 0 \\
\frac{1}{t} \nabla^2 F (x) \Delta x + \Delta s = -s - \frac{1}{t} \nabla F(x)
\end{array}
\end{equation}
Then by substituting $\Delta s, \Delta x$ into the middle block in \eqref{eq:iptKKT2}, we obtain
\cite[equation 3.9]{MR2436012}:
\begin{equation}
\label{ADAT_ipt}
(\cA [ \nabla F^2(x) ]^{-1} \cA^* ) \overline{ \Delta y }  = \cA [ \nabla^2 F(x)]^{-1} (s + t_+^{-1} \nabla F(x) ).
\end{equation}
Note that for infeasible-interior point method, 
some appropriate residuals are added to the right-hand-side of \eqref{ADAT_ipt} but the data on the left-hand-side remains the same.

We now observe the limiting behaviour of the left-hand-side matrix $A[ \nabla F^2(x) ]^{-1} A^T$ in~\eqref{ADAT_ipt} with the barrier 
functions introduced in \cite{MR2436012}.
For the case of the linear program, 
we see that 
$[\nabla^2 F(x)]^{-1} = \Diag(x)^2 $, where $\Diag(x)$ is the diagonal matrix with $x$ placed on the diagonal elements.
Let $x_k$ and the $k$-th iterate.
Let  $x^*$ be an optimal solution to 
$(PB_t)$ and let $A_{x*} = A \Diag(x^*)$.
We note that  $A \Diag(x^*)^2 A^T = A_{x*}A_{x*}^T$ and 
$\range ( A_{x*} ) \subsetneq \Rm$.
Thus, $A \Diag(x^*)^2 A^T$ is a singular matrix. 
As $x_k \to x^*$, ill-conditioning arises.
For the case of the semidefinite program, 
the~$(i,j)$-th entry of the data matrix in~\eqref{ADAT_ipt} is $\<A_i, X A_j X \> $.
Let $X^*$ be an optimal solution and let $M_{X^*}$ be the matrix 
where $(M_{X^*})_{i,j} = \<A_i, X^* A_j X^* \>$. Let $y$ be a vector that solves~\eqref{eq:AuxSys}. 
Then we observe
\[
M_{X^*}y =  
\begin{bmatrix}
\sum_j y_j \trace (A_1X^*A_jX^*) \\
\vdots \\
\sum_j y_j \trace (A_mX^*A_jX^*) 
\end{bmatrix}
= 
\begin{bmatrix}
 \trace \left(A_1 X^* \sum_j y_j A_j X^* \right) \\
\vdots \\
\trace \left( A_m X^* \sum_j y_j A_j X^* \right) 
\end{bmatrix} = 0 ,
\]
since $(\sum_j y_j A_j) X^* = 0$.
Hence $M_{X^*} $ is a singular matrix. Similarly, ill-conditioning arises in \eqref{ADAT_ipt} as iterate $X_k$ approaches to $X^*$.

\paragraph{Limiting Behaviour of the Extended Dual Embedding}
We begin by introducing the extended dual embedding problem from \cite{permfribergandersen}.
Let $r_p=\cA \hat{x} -b\hat{\tau}$, 
$r_d=-\cA^* \hat{y}- \hat{s} + c\hat{\tau}$,  
$r_g = \<b,\hat{y}\> - \<c,\hat{x}\> - \hat{\kappa}$, and $\alpha = \<\hat{s}, \hat{x} \> + \hat{\tau} \hat{\kappa}$, where $\hat{y} \in \Rm$, positive definite $\hat{X},\hat{S}$ and positive scalars $\hat{\tau} , \hat{\kappa}$.
Then the extended dual embedding is  
\begin{equation}
\label{eq:selfDualEmbedding}
\begin{array}{rl}
\min & \alpha \theta \\
\text{subject to} 
& \cA x -b \tau = r_p \theta \\
& - \cA^* y -s + c \tau = r_d \theta \\
& \<b,y\> - \<c,x\> - \kappa  = r_g \theta \\
& \< r_p,y\> + \<r_d, x\> + r_g \tau= -\alpha \\
&(x,s,y,\tau, \kappa, \theta ) \in \cK \times \cK^* \times \Rm \times \R_+ \times \R_+ \times \R .
\end{array}
\end{equation}
The feasible region of \eqref{eq:selfDualEmbedding} contains an analytic strictly feasible point, hence it enables the use of the feasible interior point methods and ensures strong duality.
Let $(x^*,y^*,s^*,\tau^*, \kappa^*, \theta^*)$ be an optimal solution to \eqref{eq:selfDualEmbedding}, that is a limit point of the interior point method.  
Based on the complementarity $\tau^* \kappa^*=0$ at an optimal point, one can determine the optimality, feasibility of the original problem, or identify exposing vectors for the primal~\eqref{eq:standardPrimal} or its dual; see \cite{permfribergandersen}. We focus on the case $\tau^*>0$.

In the case of linear programs, the limiting behaviour is discussed in \cite{ImWolk:22}.
We focus on a semidefinte programming.
Solving \eqref{eq:selfDualEmbedding} incorporates the 
perturbed complementarities $XS =\mu I$, $\tau\kappa = \mu$, $\mu >0$ to the constraints and we solve the system \eqref{eq:subeqSD}  to obtain the Newton direction:
\begin{subequations}
\label{eq:subeqSD}
\begin{align}
& \quad \ \  \cA \Delta X & -b \Delta\tau & - r_p \Delta\theta  &&&=&0 \label{eq:SDpf1}\\
- \cA^* \Delta y &&+  c \Delta \tau & - r_d \Delta \theta &-\Delta S & &=& 0 \label{eq:SDdf1} \\
\<b, \Delta y\>& - \<c,\Delta X\>&    &- r_g \Delta\theta & & -\Delta\kappa &=&0 \label{eq:SDopt1}\\
\< r_p,\Delta y\> &+ \<r_d, \Delta X\> & + r_g \Delta \tau && &&=& 0 \label{eq:SDdist} \\
&  \quad \Delta X \cdot S & & & + X\cdot \Delta S & &=&\mu I- XS \label{eq:SDcs1} \\
&&&  \kappa / \tau \Delta \tau & & +\Delta \kappa &=& \mu/\tau - \kappa  \label{eq:SDcs2} . 
\end{align}
\end{subequations}


Block variable elimination is a common strategy used to reduce the size of the linear system that needs to be solved. 
Substituting $\Delta S =-\cA^* \Delta y + c\Delta \tau - r_d \Delta \theta$ \eqref{eq:SDopt1}  into 
the equality obtained from \eqref{eq:SDcs1}, we get
\[
\Delta X = X \cA^* \Delta y  S^{-1}- X c\Delta \tau S^{-1} + X r_d \Delta \theta S^{-1} + \mu S^{-1} -X.
\]
Substituting $\Delta X$  into  \eqref{eq:SDpf1} yields
\[
\cA \left(  X \cA^* \Delta y   S^{-1} \right)
- \left( \cA \left( X c  X S^{-1} \right) + b \right) \Delta \tau
+ \left( \cA \left( X r_d  S^{-1} -r_p\right) \right)\Delta \theta = R,
\]
where $R$ is a residual, independent of  $\Delta y, \Delta \tau, \Delta \theta$.
We focus on the term associated with~$\Delta y$:
\[
\cA \left(  X \cA^* \Delta y   S^{-1} \right)
=
\sum_{i=1}^m  \Delta y_i \cA \left(  X A_i S^{-1} \right).
\]
The matrix representation the above has the $i$-th column as
$\cA \left( X A_i S^{-1} \right)$.
Next, we observe the limiting behaviour of this matrix, as $X$ approaches to an optimal $X^*$.
Let $\lambda$ be a solution to \eqref{eq:AuxSys}. 
We note that $\frac{1}{\tau^*}X^*$ is an optimal solution to the original primal problem and thus
$(\cA^*\lambda)X^*=0$. 
Therefore, 
\[
\sum_{i=1}^m \lambda_i \cA \left( X^* A_i (S^*)^{-1} \right) 
=
 \cA \left( X^* (\cA^* \lambda )  (S^*)^{-1} \right) 
=
 \cA \left(  0 \cdot  (S^*)^{-1} \right) =0.
\]
That is, the matrix is singular.


\subsection{Differentiability of the Optimal Value Function}

The differentiability of the optimal value function is an interesting topic
in the field of perturbation analysis.
For the class of linear programs, many results are applicable under the nondegeneracy setting \cite{MR820987,Fia:83}.
Let $f: \bE \to \R$ be a convex function.
It is shown in \cite{HaesolIm:2022,ImWolk:22}
that, under the setting $\cK = \Rnp$ and $\cK = \Snp$,  the optimal value function $p^* : \Rm \to \R \cup \{\infty\}$ defined by
\begin{equation}
\label{eq:optiValFun}
p^*(\Delta b) := \min_{x} \{ f(x) : \cA x = b + \Delta b , x \in \cK \}
\end{equation}
is not differentiable at $0$ in the absence of strict feasibility. 
This property still holds for an arbitrary conic programs. 
Let $\bar{y}$ be a solution to \eqref{eq:AuxSys}. 
We consider $\Delta b = - \epsilon \bar{y}$ for some $\epsilon >0$.
Then replacing the right-hand-side vector $b$ of $\cF$ by $b-\epsilon\bar{y}$ implies 
\[
\cA^* \bar{y} \in \cK^*, \text{ and } \<b-\epsilon \bar{y} , \bar{y}\> = 0 - \epsilon \| \bar{y} \|^2 <0.
\]
By Farkas' lemma, the perturbed system is infeasible. 
That is, 
if any feasible $\cF$ is perturbed~$b$ by a solution $\bar{y}$ of \eqref{eq:AuxSys}, $\cF$ become infeasible.

\section{Conclusions}

We showed two universal properties of 
the standard conic program that fails strict feasibility.
First we showed that every point is degenerate in the absence of strict feasibility. 
We also noted that the application of \FR can alter the algebraic interpretation of a point. 
For instance, in the absence of strict feasibility,
a strictly feasible point in the facially reduced set is nondegenerate, whereas the same point is degenerate before \FRp. 
Understanding the points for which \FR influences the degeneracy status presents an interesting avenue for exploration.
Second, we showed that the constraint system that fails strict feasibility inherits implicitly redundant constraints. 
We saw different usages of the singularity notions
conveyed by the \FR process.
While  $\sd(\cF)$ quantifies the computational challenge associated with solving a given problem, $\maxsd(\cF)$ provides a measure for the number of implicitly redundant linear constraints. 
We further explained on how the implicit redundancies impact the bound that engages the number of constraints and the near optimal behaviour of the interior point methods.

\section*{Acknowledgement}
The author would like to thank professor Henry Wolkowicz and the anonymous reviewers for providing helpful comments.








\cleardoublepage
\phantomsection
\addcontentsline{toc}{section}{Bibliography}
\bibliography{bib_Degeneracy_Regularity}




\end{document}